\documentclass[11pt]{article}
\usepackage{amsmath,amssymb,amsthm}
\usepackage{graphicx}
\usepackage{array}   
\newcolumntype{C}{>{$}c<{$}} % math-mode version of "c" column type

\usepackage{xcolor}

\theoremstyle{plain}
\newtheorem{theorem}{Theorem}
\newtheorem*{theorem*}{Theorem}
\newtheorem{lemma}{Lemma}
\newtheorem{corollary}{Corollary}
\newtheorem*{corollary*}{Corollary}
\newtheorem{definition}{Definition}

\newtheorem*{conjecture*}{Conjecture}

\theoremstyle{definition}
\newtheorem{example}{Example}
\newtheorem*{remark*}{Remark}

\def\beq{\begin{equation}}
\def\eeq{\end{equation}}
\def\ds{\displaystyle}

\def\-{\backslash}

\def\D{\mathcal{D}}

\def\L{\Lambda}

\def\a{\alpha}

\def\nx{\overline{x}}
\def\nt{\overline{t}}
\def\pfi{\varphi}

\title{Functional completeness and primitive positive decomposition of relations on finite domains}
\author{Sergiy Koshkin\\
\\
Department of Mathematics and Statistics\\
University of Houston-Downtown\\
One Main Street\\
Houston, TX 77002\\
e-mail: koshkins@uhd.edu}
\date{}
\begin{document}

\maketitle
\begin{abstract} 

We give a new and elementary construction of primitive positive decomposition of higher arity relations into binary relations on finite domains. Such decompositions come up in applications to constraint satisfaction problems, clone theory and relational databases. The construction exploits functional completeness of 2-input functions in many-valued logic by interpreting relations as graphs of partially defined multivalued `functions'. The `functions' are then composed from ordinary functions in the usual sense. The construction is computationally effective and relies on well-developed methods of functional decomposition, but reduces relations only to ternary relations. An additional construction then decomposes ternary into binary relations, also effectively, by converting certain disjunctions into existential quantifications. The result gives a uniform proof of Peirce's reduction thesis on finite domains, and shows that the graph of any Sheffer function composes all relations there. 
 
\bigskip

\textbf{Keywords}: relational operations, primitive positive formula, coclone, constraint satisfaction problem, irreducible relation, many-valued logic, functional completeness, Sheffer functions, Post algebra, relative product, hypostatic abstraction, Peirce's reduction thesis
\medskip

\textbf{MSC}: 08A02, 03G20, 08A70, 03B50, 08A40
\end{abstract}

\section*{Introduction}

First examples of primitive positive definitions appeared at the dawn of mathematical logic in de Morgan's compositions (relative products) of binary relations, and were originally motivated by linguistic expressions like ``brother of a parent". The problem of decomposing higher arity relations into binary relations was studied soon after de Morgan by C.S. Peirce \cite{Brun91}. In a more formal context, primitive positive formulas (ppfs), those that apply only conjunctions and existential quantifiers to predicates, appeared in Robinson's work on model theory in the 1950-s \cite{Rob62}. They came up independently in universal algebra in the 1960-s in the context of Pol-Inv Galois connection \cite{Born08,Lau}, as preserving invariance of relations under a function. 

Since the 1970-s, ppfs appeared in more practical contexts, as conjunctive queries in relational databases \cite{Kosh23} and as templates for polynomial time reductions of constraint satisfaction problems (CSP) \cite{CreVol08,JCG97,Sch78}. Schaefer exploited the latter in his proof of the dichotomy theorem for Boolean CSP \cite{Sch78}, and ppfs are featured prominently in Jeavons's algebraic approach to CSP since the 1990-s \cite{BJK,JCG97}. In particular, a recent generalization of Schaefer's CSP dichotomy to arbitrary finite domains relies on primitive positive reductions \cite{B3KZ}.

We will be mainly interested in the perspective from Peirce's classical work, which was motivated, in part, by a functional analogy. It is a trivial observation that composition of $1$-input functions does not produce functions with more inputs, while composition of $2$-input functions can produce $n$-input functions with any $n\geq2$. Let us call the functional reduction thesis (FRT) the two-part claim that $n$-input functions decompose into (reduce to) $2$-input functions, but the latter do not all decompose into $1$-input ones (are irreducible). While the irreducibility part is trivial, proving the reducibility part needs some work. 

On Boolean domains, it follows from the existence of disjunctive normal forms, with only $\land$, $\lor$ and $\neg$ composed, which are implicit in Peirce's work of 1880-s and were explicitly derived by Schr\"oder in 1890, see \cite{Lew18}. 
The case of arbitrary finite domains was settled in Post's seminal 1920 dissertation, where the general question about functionally complete sets of functions was also raised, and solved in the Boolean case \cite{Urq09}. Post proved FRT by introducing a special class of many-valued logics with cyclic negation, and generalizing truth tables and disjunctive normal forms to them. The case of infinite domains was settled only in 1945 by Sierpi\'nsky \cite{Sier45}, but, surprisingly, it turned out to be simpler.

Even before these results, Peirce suggested that such reductions transfer from functions to relations. Interpreting, say, $f(t,z)$ and $g(x,y)$ as relations, $u=f(t,z)$ and $t=g(x,y)$, respectively, we can express their composition $f(g(x,y),z)$ as
\begin{equation}\label{GraphProd}
\big[u=f(g(x,y),z)\big]=\exists t\big[\big(u=f(t,z)\big)\land\big(t=g(x,y)\big)\big],
\end{equation}
a special case of the relative product of arbitrary relations, e.g.
\begin{equation}\label{RelProd}
R(x,y,z,u)=\exists t\big[S(x,y,t)\land T(t,z,u) \big].
\end{equation}
Compositions of more than two functions are generalized by expressions like the ``triple junction" \cite{Herz}:
\begin{equation}\label{3junc}
R(x,y)=\exists t\big[F(t,x)\land G(t,y)\land H(t,z)\big].
\end{equation}
Using identity relations, one can also express conjunctions with identified variables, like $I_3(x,y,z):=[(x=y)\land(y=z)]$, i.e. joins of the relational database theory. The primitive positive composition of relations, pp-composition for short, is then seen as a natural extension of functional composition.

By analogy, let us call the relational reduction thesis (RRT) the two-part claim that $n$-ary relations for $n\geq2$ reduce to binary relations, but not all binary relations reduce to unary ones. This is equivalent to a part of Peirce's reduction thesis formulated even before the more elementary FRT \cite{Kosh22}. Peirce gave a clever general construction of reductions, which he called ``hypostatic abstraction", that, suitably modified, works on infinite domains. However, on finite domains, it requires adjoining additional elements and extending original relations to the larger domain, see Section \ref{Inf}.

One might expect that RRT on finite domains could be analogously derived from standard results on relational completeness, but such results are largely missing. For example, there are no general analogs of either Post's or Slupecki's criteria \cite{Ros77} for relationally complete systems. Nonetheless, RRT on finite domains has been resolved by other means, but the results are scattered in the literature, often formulated in different terms and proved by non-elementary methods.

The most non-trivial reducibility part follows from an auxiliary construction of a pp-complete system of two binary relations in the seminal 1969 work of BKKR (at the end of part II) that introduced the Pol-Inv Galois connection \cite{BKKR}. One can also derive the result non-constructively from the Galois connection itself \cite[1.1.22]{PosKal}. That RRT {\it fails} on Boolean domains follows from another landmark, Schaefer's 1978 paper that proved the dichotomy theorem for Boolean CSP \cite{Sch78}. Namely, Schaefer proved in passing that all binary relations on Boolean domains are bijunctive while some ternary relations are not, and bijunctivity is preserved by the pp-composition. However, none of the above or subsequent works spell out the result (\cite{HerPos04} comes closest) or connect RRT to FRT.

Is there a connection? In a 1906 letter to James, Peirce declared that they are ``one and the same". Even making room for rhetorical flourishes, this is much too rash. Not all relations are functions, and  pp-composition of graphs from graphs does not always come from composition of their functions. On infinite domains, hypostatic abstraction bypasses FRT altogether rather than builds on it, so it is unclear why FRT even suggests RRT. Note that the connection that is supposed to be at play here is an elementary one -- treating graphs of {\it individual} functions {\it as} relations, see \eqref{GraphProd}. It is quite distinct from the Galois connection between clones and coclones, which has a {\it class} of functions {\it preserving} a class of relations.

In this paper, we will establish the desired connection and give elementary proofs for all cases of RRT based on it. Unlike the BKKR construction, it provides a wide selection of pp-complete systems, rather than a single one, and capitalizes on well-developed methods of functional decomposition. In fact, graphs of functions in any functionally complete system form a relationally pp-complete system (Theorem \ref{FunRed}). This is somewhat surprising because there are many non-graph relations, and they cannot always be pp-composed from graphs in general coclones. As a simple consequence, we obtain the existence of\, `Sheffer relations' that alone form a pp-complete system. The graph of the Sheffer stroke is such a relation on Boolean domains. The proof is based on treating relations as multivalued partially defined `functions' that are then pp-decomposed into ordinary 2-input functions, e.g. by using canonical normal forms in Post's many-valued logics. 

Conceptually, our construction  explains why pp-reductions to ternary relations are always possible (assuming that reducibility of functions to $2$-input functions is taken for granted). From the practical point of view, it is of interest because pp-composition is less intuitive than functional composition and computational methods for it are much less developed. Moreover, constructing convenient sets of relations that are pp-complete in coclones (their {\it bases}) has received much attention in the recent CSP literature \cite{BRSV05,Lag17}.

However, passing from functions to their graphs increases arity by $1$, so existence of functionally complete systems of $2$-input functions translates into reducibility of higher arity relations to {\it ternary}, not binary, relations. This is as it should be in general, given that pp-irreducible ternary relations do exist on Boolean domains ($\neg I_3$ is one). As a supplement, we give another elementary construction that reduces cofinite relations of small arity to binary relations (Theorem \ref{CoBin}). In a sense, it is dual to hypostatic abstraction, and converts disjunctions of unary cosingletons into existential quantifications over certain binary predicates on non-Boolean domains. For completeness, we also give an elementary discussion of the Boolean case and relate pp-composition to Peirce's original ``bonding" that deviates from it on ternary relations.

The paper is organized as follows. In Section \ref{Inf} we recall hypostatic abstraction that reduces higher arity relations to binary ones after extending the domain, and L\"owenheim-Sierpi\'nsky's pairing construction that does the same for functions, proving RRT and FRT on infinite domains. In Section \ref{FunRel} we interpret relations with a distinguished argument (called {\it relatives}) as multivalued partially defined `functions' and reduce higher arity relations to ternary ones on arbitrary domains by exploiting FRT. Section \ref{PostLog} introduces Post's logics and illustrates the algorithm for converting functional into relational reductions by our method. In Section \ref{RedBin} we give the `existentialization of disjunctions' construction that reduces cofinite relations of small arity to binary ones on domains with at least three elements. In Section \ref{BoolIrr} we discuss bijunctive relations and give an elementary proof of pp-irreducibility of some ternary relations on Boolean domains. Section \ref{TerId} introduces Peirce's notion of bonding and his conversion of pp-reductions to bond reductions for arities $4$ and higher by clever use of ternary identities. In the final section, we summarize our conclusions and state some open problems.

\section{Preliminaries}\label{Prelim}

We use the standard notation and terminology of set theory and predicate calculus. Relations are defined on a set $\D$ called the {\bf domain} and are subsets of its Cartesian powers $\D^n:=\D\times\dots\times\D$. The cardinality of a set $S$ is denoted $|S|$. As is well known, $|\D^n|=|\D|^n$. When $R\subseteq\D^n$ the number $n$ is called the relation's {\bf arity} and the relation is called $n$-ary relation. For $n=1,2,3,4$ we use the shorthands unary, binary, ternary, quaternary, respectively. Relations are called {\bf finite} when they have finite cardinality, {\bf small} when their cardinality is no more than that of the domain, {\bf cofinite} when their complements are finite, and {\bf finitary} when they have finite arity. We only consider finitary relations.

Elements of $\D^n$ are called $n$-tuples or just {\bf tuples}, when $n$ is understood or immaterial. Small Latin letters are typically used for elements of $\D$, and small Greek letters for tuples in $\D^n$. If $\a\in\D^n$ it's $i$-th member is denoted $\a_i$. We adopt the usual convention of canonically identifying tuples of tuples with longer tuples, and hence of identifying $\D^n\times\D^{m}$ with $\D^{n+m}$, and so on. Some standard $n$-ary relations are the {\bf universal relations} $U_n:=\D^n$ with all $n$-tuples, and the {\bf identity relations} $I_n$ with all and only $n$-tuples of identical elements. Occasionally, we abbreviate the latter as $\vec{a}:=(a,\dots,a)$. 

Unless otherwise stated, all predicates will be interpreted on a domain, and relations will be identified with the predicates they interpret. In particular, the same letter will be used for a relation and its predicate, i.e. $R(\a)=R(\a_1,\dots,\a_n)$ will mean the same as $\a\in R$, and $\neg R$ will denote the complement of $R$ in $\D^n$. Furthermore, we identify relations with  their truth value functions, $0$\,-$1$ functions on $\D^n$. Logical connectives, conjunction $\land$, disjunction $\lor$, etc., will be used with the usual meaning except in Section \ref{PostLog}, where they are explicitly redefined in many-valued logics. 

Technically, {\bf primitive positive formulas (ppfs)} are those with only existential quantifiers in the prefix and only conjunctions as connectives in the quantifier-free part \cite{CreVol08,Lag17,Rob62}. However, any formula with only existential quantifiers and conjunctions can be converted into such a form by prenexing quantifiers, so we will apply the term more broadly to all such formulas. In particular, our ppfs are closed under taking conjunctions and existentially quantifying. The following definition introduces the central concepts used in the paper.
\begin{definition}[{\bf pp-composition}]\label{ClonComp}
A relation is a {\bf composition of relations}, or {\bf decomposes} into them, when its predicate can be expressed by a ppf that contains only the predicates of the composing relations. A decomposition is a {\bf reduction} when the composing relations have strictly lower arity than the composite. A relation is {\bf reducible} when it admits a reduction. 
\end{definition}
\noindent We prefer ``pp-composition" to the more common ``pp-definition" to emphasize the functional analogy and algebraic aspects, and drop ``pp" when context allows. Cartesian products (free conjunctions), joins and projections (existential quantifications) of the relational database theory, relative products \eqref{RelProd}, triple \eqref{3junc} and higher junctions are all special cases of pp-composition. {\bf Clones} are sets of functions containing projections and closed under composition, {\bf coclones} are sets of relations containing diagonals and closed under pp-composition, they are dual to each other under the Pol-Inv Galois connection \cite{Lau,PosKal}.

\section{The infinite we do immediately...}\label{Inf}

In this section we will describe Peirce's hypostatic abstraction that reduces $n$-ary relations to binary ones, and the pairing construction of L\"owenheim and Sierpi\'nsky that does the same for functions. Both require the domain to be infinite to work without extending it. This is one of those cases that prompted Erd\"os and Tarski to quip \cite{Zus16}:``The infinite we do immediately, the finite takes a little time."

Given a domain $\D$ and a relation $R(x_1,\dots, x_n)$ on it, extend the domain by $n$-tuples of elements of $\D$, and define binary relations $R_i(t,x)$ to hold if and only if there is a tuple $t=(x_1,\dots, x_n)\in R$ with $x=x_i$.  `Abstract' tuples are hypostatized into new elements here, hence the name. By construction, 
\begin{equation}\label{HypAbs}
R(x_1,\dots,x_n)=\exists t\big[R_1(t,x_1)\land\dots\land R_n(t,x_n)\big],
\end{equation}
and an arbitrary $n$-ary relation is reduced to binary ones by this ppf. The domain extension can sometimes be avoided using the following trick due to Herzberger \cite{Herz} (Sierpi\'nsky used a similar trick for functions much earlier). Instead of adjoining the tuples, define an injective map $\tau:R\to\D$ that uniquely `codes' them into elements of $\D$, and redefine $R_i(t,x)$ into a relation on the original $\D$ that holds when $t=\tau(x_1,\dots,x_n)$ is in the range of $\tau$ and $x=x_i$. Then \eqref{HypAbs} still holds and no domain extension is needed.

For such encoding map to exist, we clearly need $|R|\leq|\D|$, such relations were termed {\it small} in \cite{Kosh22}. In general, $|R|\leq|\D|^n$ for $n$-ary $R$, and when $\D$ is infinite $|\D|^n=|\D|$ for any $n$. In other words, any (finitary) relation on an infinite domain is small and, hence, reducible.  However, in general, the requisite bijections between $\D$ and $\D^n$ are non-constructive. By a theorem of Tarski \cite[11.3]{Jech}, $|\D|^2=|\D|$ for all infinite $\D$ is already equivalent to the axiom of choice. 

Since hypostatic abstraction can reduce ternary to binary relations it does not reduce functions to functions, or we could decompose $2$-input functions into $1$-input ones. L\"owenheim's pairing construction \cite{Low} does not have this shortcoming, but it requires adjoining (or encoding) not just $n$-tuples, but also $2$-tuples, $3$-tuples, ..., $(n-1)$-tuples of the domain elements. We will describe the functional version of it used by Sierpi\'nsky \cite{Sier45}, who was apparently unaware of L\"owenheim's work, to prove FRT on infinite domains. 

After extending the domain, define recursively 
$$
i_2(x_1,x_2):=(x_1,x_2),\ \ \ \ i_k\big((x_1,\dots, x_{k-1}),x_k\big):=(x_1,\dots,x_k)
$$ 
for $3\leq k\leq n$, and define $\widehat{f}((x_1,\dots,x_n)):=f(x_1,\dots,x_n)$. Then extend $i_k$ and $\widehat{f}$ arbitrarily to other elements. It is easy to see by induction that 
$$
f(x_1,\dots,x_n)=\widehat{f}\Big(i_n\big(\dots i_4\,\big(\,i_3\,(\,i_2\,(x_1,x_2),x_3),x_4\big)\dots, x_n\big)\Big)
$$ 
reduces $f$ to a composition of $2$-input $i_k$ and $1$-input $\widehat{f}$ (which can be turned into a $2$-input by adding a dummy variable if one so wishes). Again, to avoid extending the domain one can encode the adjoined elements into the original ones. This is always possible on infinite domains, but requires the axiom of choice, as Sierpi\'nsky explicitly noted. We can summarize the constructions in this section in the following theorem that combines the reducibility clauses of RRT and FRT on infinite domains.
\begin{theorem}\label{InfRed} Let $n\geq3$. Any $n$-ary relation  on an infinite domain is a composition of $n$ binary relations. Any $n$-input function on an infinite domain is a composition of $n$ $2$-input functions. 
\end{theorem}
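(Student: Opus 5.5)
The plan is to make the two constructions of this section work on the original domain $\D$, with no extension, by encoding. We then count the composing relations and functions. The only set-theoretic input is that for infinite $\D$ and every $k\geq1$ there is a bijection $\D^k\to\D$. This follows from $|\D|^2=|\D|$ by induction on $k$ and uses the axiom of choice, as noted above via Tarski's theorem.

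For the relational part, let $R\subseteq\D^n$. Since $|R|\leq|\D^n|=|\D|$, there is an injection $\tau:R\to\D$. Define binary relations $R_i(t,x)$ for $i=1,\dots,n$ to hold iff $t\in\tau(R)$ and $x=\big(\tau^{-1}(t)\big)_i$. We then verify \eqref{HypAbs} in both directions.
\begin{itemize}
\item If $(x_1,\dots,x_n)\in R$, take $t=\tau(x_1,\dots,x_n)$; then every $R_i(t,x_i)$ holds.
\item Conversely, if some $t$ satisfies all $R_i(t,x_i)$, then $t=\tau(\a)$ for a unique $\a\in R$, and $x_i=\a_i$ for every $i$. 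Hence $(x_1,\dots,x_n)=\a\in R$.
\end{itemize}
This expresses $R$ as a ppf in exactly $n$ binary relations. One small point: if $R$ is empty, the construction still works, since every $R_i$ is empty and the formula defines the empty relation.

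For the functional part, fix bijections $c_k:\D^k\to\D$ for $2\leq k\leq n$. Define 2-input functions as follows:
\begin{itemize}
\item $j_2(x_1,x_2):=c_2(x_1,x_2)$;
\item $j_k(s,x_k):=c_k\big(c_{k-1}^{-1}(s),x_k\big)$ for $3\leq k\leq n$, identifying tuples of tuples with longer tuples;
\item $g(s,y):=f\big(c_n^{-1}(s)\big)$, with $y$ a dummy variable.
\end{itemize}
Because the $c_k$ are bijections, every $j_k$ and $g$ is defined on all of $\D^2$, so no arbitrary extension is needed. By induction on $k$ we get $j_k(\dots j_2(x_1,x_2)\dots,x_k)=c_k(x_1,\dots,x_k)$. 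Therefore
$$f(x_1,\dots,x_n)=g\big(j_n(\dots j_3(j_2(x_1,x_2),x_3)\dots,x_n),x_1\big).$$
This uses $n-1$ functions $j_2,\dots,j_n$ and the function $g$, so $n$ two-input functions in total.

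A slightly more economical option is available: fold $f$ into the last pairing step by setting $j_n'(s,x_n):=f\big(c_{n-1}^{-1}(s),x_n\big)$. That uses only $n-1$ functions, and adding a dummy function brings the count to exactly $n$ if the statement is read literally.

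The main obstacle is not combinatorial but foundational. It is justifying the encodings $\tau$ and $c_k$ without extending the domain, which needs $|\D^k|=|\D|$ and hence choice. I would simply invoke that cardinal arithmetic fact. The remaining work is the routine bookkeeping above, checking that the compositions are correct and that the counts come out as stated.
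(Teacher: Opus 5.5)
Your proposal is correct and follows the paper's route. For relations, it uses hypostatic abstraction \eqref{HypAbs} with Herzberger's injective coding $\tau:R\to\D$; for functions, it uses the L\"owenheim--Sierpi\'nsky pairing chain followed by $\widehat{f}$, encoded back into $\D$ using the axiom of choice. Working with bijections $c_k:\D^k\to\D$ lets you skip the paper's ``extend arbitrarily'' step, and your remark that folding $f$ into the last pairing map already uses only $n-1$ two-input functions is a valid minor sharpening of the stated count.
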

It is doubly ironic that the (ostensibly) harder case of relations was settled several decades earlier than the case of functions, and that the case of functions on finite domains, where one cannot produce simple explicit constructions, was settled before the one on infinite domains.

\section{Functions, relations, relatives}\label{FunRel}

In this section we will give an elementary construction that reduces relations on a domain to ternary relations using that functions on it can be reduced to $2$-input functions. As with the hypostatic abstraction and the pairing construction, we will need the axiom of choice for the general case. However, the construction is of most interest on finite domains, where it is unnecessary and the reductions can be performed constructively. That any function on a finite domain reduces to a composition of $2$-input functions follows from Post's results for many-valued logics discussed in the next section.

Considered as relations, functions have a distinguished argument designated as output. Our approach will be to analogize relations to functions, so we will extend the notion of output to general relations. Linguistic expressions that Aristotle, de Morgan and Peirce took as motivation, such as ``brother of \underline{\hspace{1em}}" or ``son of \underline{\hspace{1em}} by \underline{\hspace{1em}}", distinguish the output explicitly in the noun phrases. We will follow them in calling relations with a distinguished argument {\it relatives} \cite{Bur97}. In clone theory, relatives on finite domains are called ``hyperoperations" \cite{Ros96}, ``partial hyperoperations" \cite{Rom02} or ``multifunctions" \cite{Born08}. 
\begin{definition}\label{Relative}
A {\bf relative} is a relation on $\D$ with a distinguished argument called the {\bf output}. The remaining arguments are called {\bf inputs}. Unless otherwise stated, the output is designated to be the last listed argument, e.g. $x_{n}$ in $R(x_1,\dots,x_{n})$. For $\a\in\D^{n-1}$ we define its {\bf image under $R$} as the set $R(\a):=\{a\in\D\,|\,(\a,a)\in R\}$. We call a relative {\bf partial function} when $R(\a)$ is empty or a singleton for every $\a$, and a {\bf multifunction} when $R(\a)$ is non-empty for every $\a$. When $R(\a)$ is a singleton for every $\a$, we call $R$ the {\bf graph of a function} $\D^{n-1}\to\D$.
\end{definition}
\noindent Thus, general relatives can be characterized as `partial multifunctions'. It will be convenient for us to interpret the domain of a relative as itself a relative rather than as just a set.
\begin{definition}\label{RelDom}
For any relative $R(x_1,\dots,x_n)$, we define the domain relative 
$$
\textup{Dom}_R(x_1,\dots,x_{n-1}):=\exists\,t\big[R(x_1,\dots,x_{n-1},t)\big]
$$ 
as the projection on its inputs.
\end{definition}
Note that $\textup{Dom}_R$ is a relative of lower arity. The following lemma codifies that any relative is a multifunction on its domain. The proof is straightforward and we omit it.
\begin{lemma}\label{DomSplit} Any relative $R$ decomposes as
\begin{equation}\label{Fdom}
R(x_1,\dots,x_n)=F(x_1,\dots,x_n)\land\textup{Dom}_R(x_1,\dots,x_{n-1}),
\end{equation}
where $F$ is a multifunction. If $R$ is a partial function then $F$ can be chosen to be a function, i.e. $F(x_1,\dots,x_n)$ if and only if $x_n=f(x_1,\dots,x_{n-1})$.
\end{lemma}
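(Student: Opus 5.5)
The plan is to extend $R$ off its domain in a controlled way and check the identity \eqref{Fdom} pointwise. I will define $F$ by cases on whether $\a\in\D^{n-1}$ lies in $\textup{Dom}_R$. For $\a\in\textup{Dom}_R$, put $F(\a)=R(\a)$. For $\a\notin\textup{Dom}_R$, put $F(\a)=\D$ (any nonempty subset would do). Equivalently,
$$
F(x_1,\dots,x_n):=R(x_1,\dots,x_n)\lor\neg\textup{Dom}_R(x_1,\dots,x_{n-1}).
$$
The formula is used only to define $F$ as a relation; what matters is that $F$ itself then appears as a predicate in the ppf \eqref{Fdom}.

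The first step is to check that $F$ is a multifunction. If $\a\in\textup{Dom}_R$, Definition \ref{RelDom} gives some $t$ with $(\a,t)\in R$, so $F(\a)=R(\a)\neq\emptyset$. Otherwise $F(\a)=\D$, which is nonempty as long as the domain is nonempty.

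The second step is to verify \eqref{Fdom}. Fix $(\a,a)\in\D^n$ with $\a\in\D^{n-1}$.
\begin{itemize}
\item If $\a\in\textup{Dom}_R$, the right side reduces to $F(\a,a)$, which is $R(\a,a)$ by construction.
\item If $\a\notin\textup{Dom}_R$, the right side is false. The left side $R(\a,a)$ is also false, since otherwise $a$ would witness $\a\in\textup{Dom}_R$.
\end{itemize}
Hence the two sides agree everywhere. The right side is a conjunction of two predicates, so it is a ppf; note that $\textup{Dom}_R$ is itself a ppf in $R$.

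For the partial function clause, the images $R(\a)$ on $\textup{Dom}_R$ are singletons. I then choose $F(\a)$ to be a singleton for $\a\notin\textup{Dom}_R$ as well, by fixing an element $d_0\in\D$ and setting $f(\a)=d_0$ there, while $f(\a)$ is the unique element of $R(\a)$ on $\textup{Dom}_R$. The same two-case check shows \eqref{Fdom} holds with $F$ the graph of $f$.

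There is no real obstacle in this argument. The only point to watch is that the modification off the domain is invisible after conjoining with $\textup{Dom}_R$, together with the trivial requirement that $\D$ be nonempty. No choice is needed, because the extension off $\textup{Dom}_R$ is the constant $d_0$ (or all of $\D$) and the values on $\textup{Dom}_R$ are forced.
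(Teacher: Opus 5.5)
Your proof is correct. The paper omits this proof as straightforward, and your argument is exactly the routine verification it has in mind: extend $R$ off $\textup{Dom}_R$ by $F=R\lor\neg\textup{Dom}_R$ (or by a constant value $d_0$ in the partial-function case), then check \eqref{Fdom} pointwise in the two cases $\alpha\in\textup{Dom}_R$ and $\alpha\notin\textup{Dom}_R$.
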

Thus, any relation can be reduced to a multifunction and a relation of lower arity. In turn, we will reduce multifunctions to (ordinary) functions by using value selectors. 
\begin{definition}\label{Select}
Let $F$ be a multifunction on $\D$. A function $f:\D^{n-1}\to\D$ is called a {\bf value selector} for $F$ when for any $\a\in\D^{n-1}$ we have 
$(\a,f(\a))\in F$, i.e. $f(\a)\in F(\a)$.
\end{definition}
When $\D$ is infinite, existence of selectors depends, in general, on the axiom of choice. There are $\prod_{\a\in\D^{n-1}}|F(\a)|$ possible selectors, which can run up to $\big(\D^{n-1}\big)^{|\D|}=|\D|^{(n-1)|\D|}$ in cardinality. However, it will be important that at most $|\D|$ are needed to cover all tuples in $F$. This is exploited in the next lemma.
\begin{lemma}\label{MultSel} Let $\D$ be a well-orderable set. Then any $n$-ary multifunction $F$ on $\D$ decomposes as
\begin{equation}\label{MSel}
F(x_1,\dots,x_n)=\exists\,t\big[x_n=f(t,x_1,\dots,x_{n-1})\big],
\end{equation}
where $f:\D^{n}\to\D$ is a function.
\end{lemma}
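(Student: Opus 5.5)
The plan is to build $f$ out of a family of value selectors indexed by elements of $\D$, with the extra input $t$ choosing which selector to apply. The key requirement is that the family be \emph{covering}: every tuple $(\a,a)\in F$ must be hit, i.e. for each such pair there is some $t$ with $f(t,\a)=a$. Given such a family, both inclusions in \eqref{MSel} are immediate. If $x_n=f(t,x_1,\dots,x_{n-1})$ then $x_n$ is the value of a selector, so $(x_1,\dots,x_n)\in F$ by Definition \ref{Select}. Conversely, covering says exactly that every tuple of $F$ arises this way. So the proof reduces to producing at most $|\D|$ selectors that together cover $F$, and then indexing them by elements of $\D$.

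First fix a well-ordering of $\D$, which the hypothesis allows. Using it, define a \emph{default} selector $f_0(\a):=\min F(\a)$. This is a legitimate function on $\D^{n-1}$ because $F$ is a multifunction, so every $F(\a)$ is nonempty. Next, for each $t\in\D$ and $\a\in\D^{n-1}$, set
\[
f(t,\a):=\begin{cases} t, & t\in F(\a),\\ f_0(\a), & t\notin F(\a).\end{cases}
\]
For each fixed $t$, the map $\a\mapsto f(t,\a)$ takes values in $F(\a)$, so it is a value selector for $F$. Covering is then immediate: given $(\a,a)\in F$, take $t=a$, and $f(a,\a)=a$. This gives the decomposition \eqref{MSel}, with $f:\D^n\to\D$ an ordinary function.

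The main obstacle is constructing $f_0$ without making arbitrary choices over all of $\D^{n-1}$. This is exactly where well-orderability enters: it gives a uniform rule for picking an element from each nonempty subset of $\D$. Everything else is routine verification of the two inclusions. It is also worth remarking that on finite $\D$ no choice principle is needed at all, and $f$ is computable directly from the table of $F$.
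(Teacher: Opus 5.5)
Your proof is correct, and it takes a simpler route than the paper's.

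The paper well-orders $\D$ and builds, by transfinite recursion, a sequence of selectors $f_\sigma$. Each $f_\sigma$ picks the least element of $F(\a)$ not already picked by an earlier selector, or the least element if none remain. The paper then argues that every image is exhausted by some ordinal $\xi$ with $|\xi|\le|\D|$. Finally it must encode $\xi$ into $\D$ through a bijection with a subset $T\subseteq\D$, with padding outside $T$.

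You index selectors by the elements of $\D$ themselves. The $t$-th selector returns $t$ whenever $t\in F(\a)$, and otherwise falls back to one default selector. This removes both the transfinite recursion and the encoding step. It also uses the well-order only once, to obtain $f_0$.

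As a result, your argument proves a slightly sharper statement. A multifunction $F$ satisfies \eqref{MSel} exactly when it admits at least one value selector. Necessity holds because fixing $t$ in \eqref{MSel} yields a selector. Well-orderability of $\D$ is therefore only a convenient sufficient condition for the existence of that one selector.

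The paper's enumeration uses fewer distinct selectors: only as many as the largest order type of an image $F(\a)$, which is $\max_\a|F(\a)|$ on finite domains. This gains nothing in \eqref{MSel} itself, since $t$ ranges over all of $\D$ anyway.

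Your selectors have the same shape the paper uses in Example~\ref{NegIred}. There $f_i$ returns $i$ unless $x=\vec{i}$, and then falls back to the default value $i'$.
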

\begin{proof}
Well-order $\D$ and let $f_1$ be the value selector of $F$ that picks the least element of $F(\a)$ at each $\a\in\D^{n-1}$. For every ordinal $\sigma$ we define $f_{\sigma}$ to pick the least element of $F(\a)$ not picked by any $f_{\tau}$ with $\tau<\sigma$, if any, and the least element of $F(\a)$ otherwise. By transfinite induction, at some ordinal $\xi$ with $|\xi|\leq|\D|$ there will be no unpicked elements left for any $\a\in\D^{n-1}$, because $|F(\a)|\leq|\D|$. We put $\xi$ into $1$-$1$ correspondence with a subset $T$ of $\D$ and define $\sigma(t)$ to be the image of $t$ under this correspondence when $t\in T$ and $1$ otherwise. Finally, we set $f(t,x_1,\dots,x_{n-1}):=f_{\sigma(t)}(x_1,\dots,x_{n-1})$. Since $f_{\sigma}$ is a selector for every $\sigma$ only tuples in $F$ will satisfy $x_n=f_{\sigma(t)}(x_1,\dots,x_{n-1})$ for any $t$, and all of them are accounted for because every $\sigma$ in $\xi$ is $\sigma(t)$ for some $t$. Thus, \eqref{MSel} holds.
\end{proof}
We will refer to $f(t,x_1,\dots,x_{n-1})$ as the {\bf indexing function} (of value selectors). When $\D$ is finite, it is obtained constructively, and if we identify $\D$ with the truth value set of a many-valued logic then we even have a natural well-order on $\D$ that defines it uniquely. It is known that any $n$-input function with $n\geq3$ on any domain decomposes into $2$-input functions (see Section \ref{PostLog} for finite domains). Relations of the form $u=f(t,x)$ can then be reduced to ternary relations by recursively unfolding nested compositions. 
Combining this procedure with Lemmas \ref{DomSplit} and \ref{MultSel}, we get our main result.
\begin{theorem}[{\bf Relative reduction}]\label{FunRed} Any $n$-ary relation $R$ on any domain $\D$ decomposes as
\begin{equation}\label{Fred}
R(x_1,\dots,x_n)=\exists\,t\big[x_n=f(t,x_1,\dots,x_{n-1})\big]\land\textup{Dom}_R(x_1,\dots,x_{n-1}),
\end{equation}
where $f:\D^{n}\to\D$ is a function. Moreover, given a functionally complete set, every relation can be decomposed into graphs of functions from this set. In particular, every $n$-ary relation on $\D$ with $n\geq4$ is reducible to ternary relations.  
\end{theorem}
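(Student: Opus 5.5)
The plan is to combine Lemma \ref{DomSplit} and Lemma \ref{MultSel}, and then unfold a functional decomposition of the indexing function into graphs.

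\textbf{Step 1: the decomposition \eqref{Fred}.} By Lemma \ref{DomSplit}, write $R=F\land\textup{Dom}_R$ with $F$ a multifunction. For this I take $F(\a,a)$ to hold when either $(\a,a)\in R$, or $\textup{Dom}_R(\a)$ fails (and $a$ is arbitrary). This $F$ has nonempty images and agrees with $R$ on the domain. Applying Lemma \ref{MultSel} to $F$ gives $F(x_1,\dots,x_n)=\exists t\,[x_n=f(t,x_1,\dots,x_{n-1})]$, and substituting this yields \eqref{Fred}. For general $\D$ this step uses well-orderability, i.e. the axiom of choice, as the section warns. For finite $\D$ it is constructive.

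\textbf{Step 2: decomposition into graphs from a functionally complete set $\mathcal{S}$.} The relation $x_n=f(t,x_1,\dots,x_{n-1})$ is the graph of $f$. Write $f$ as a term over $\mathcal{S}$ and projections. I induct on the term. For each non-variable subterm $g_i(s_1,\dots,s_k)$ I introduce a fresh variable $u_i$ and the atom $u_i=g_i(v_1,\dots,v_k)$, where each $v_j$ is either an original variable or the fresh variable of $s_j$. The graph of the whole term is then obtained by existentially quantifying all intermediate $u_i$, exactly as in \eqref{GraphProd}. The outermost variable is identified with $x_n$, either directly or via the identity relation. If the term is a bare projection, I use the identity relation, which is available in any ppf.

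\textbf{Step 3: the domain relative.} $\textup{Dom}_R$ is an $(n-1)$-ary relation. I apply the same procedure to it, giving an induction on arity. The base cases are relations of arity at most the arity of the graphs in $\mathcal{S}$. Here I expect the main obstacle: low-arity relations, such as unary relations or $\textup{Dom}_R$ of small arity, must themselves be pp-composed from graphs.

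\begin{itemize}
\item For arity $m\ge 2$ this works since \eqref{Fred} recurses down.
\item For arity $1$, a unary $P\subseteq\D$ equals $\exists t\,[x=g(t)]$ for any $g$ with image $P$, provided $P\neq\emptyset$. Here $g$ is composable over $\mathcal{S}$ by completeness, and the empty relation is $\exists t\,[x=c_1(t)\land x=c_2(t)]$ with distinct constants $c_1,c_2$.
\end{itemize}

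So the recursion bottoms out entirely in graphs of functions from $\mathcal{S}$.

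\textbf{Step 4: the final claim.} For the arity statement, take $\mathcal{S}$ to be all $2$-input functions. This set is functionally complete by the result cited for finite domains in Section \ref{PostLog}, and by Theorem \ref{InfRed} for infinite domains. Their graphs are ternary, so any relation with $n\ge4$ is a composition of ternary relations, hence reducible.
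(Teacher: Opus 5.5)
Your proposal is correct and follows the paper's proof. Like the paper, you obtain \eqref{Fred} from Lemmas \ref{DomSplit} and \ref{MultSel}, unfold a term for the indexing function into graphs as in \eqref{Fun2Rel}, and recurse on the lower-arity $\textup{Dom}_R$; your explicit choice of the multifunction $F$ and your handling of the unary and empty base cases fill in details the paper leaves implicit.
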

\begin{proof} Formula \eqref{Fred} is a direct consequence of Lemmas \ref{DomSplit} and \ref{MultSel}. To decompose the first conjunct in \eqref{Fred}, set $x_0:=t$ and suppose
$$
f(t,x_1,\dots,x_{n-1})=g\big(h_1(x_{\L_1}),\dots,
h_m(x_{\L_m})\big),
$$
where $\L_i\subseteq\{0,1,\dots,n-1\}$ and $x_{\L}$ is the list of $x_i$ with $i\in\L$. Then
\begin{multline}\label{Fun2Rel}
\exists\,t\big[x_n=f(t,x_1,\dots,x_{n-1})\big]\\ 
= \exists\,t\exists\,t_1\dots\exists\,t_m
\big[x_n=g(t_1,\dots,t_m)\land t_1=h_1(x_{\L_1})\land\dots\land t_m=h_m(x_{\L_m})\big].
\end{multline}
By iterating the procedure, if necessary, we can decompose the first conjunct into graphs of functions from a given complete set. The same process is then applied to $\textup{Dom}_R$. Since the arity is reduced by $1$ at each step the process terminates in finitely many steps. Since there exist functionally complete sets of $2$-input functions, and their graphs are ternary, the last claim follows.
\end{proof}
\noindent Theorem \ref{FunRed} not only extends Theorem \ref{InfRed} to all domains, but its proof also reduces RRT to FRT, as Peirce's remarks suggested. However, the result is weaker. The hypostatic abstraction decomposed all relations into {\it binary} relations and bounded their number by arity. We will mend the first flaw in Section \ref{RedBin}, but there can be no uniform arity bound on finite domains \cite{Kosh23}. 

Recall that a function is called  Sheffer when it alone forms a functionally complete system \cite[11.2]{Lau},\cite{Ros76}. Peirce found one such function, NOR, a.k.a. Peirce arrow, on Boolean domains in an unpublished 1902 manuscript. Sheffer published the result for NAND, a.k.a. Sheffer stroke, in 1913 \cite{AnelTT}. On general finite domains, the first Sheffer function was found by Webb in 1935, and it is a $2$-input one \cite{Ros76,Zus16}. Their graphs give us ternary Sheffer relations on the corresponding domains, a result that appears to be new.
\begin{corollary}[{\bf Sheffer relations}]\label{ShefRel} On any finite domain, there exist ternary relations that alone compose all relations.
\end{corollary}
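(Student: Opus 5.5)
The plan is to take $S$ to be the graph of a $2$-input Sheffer function $s$ on the finite domain $\D$, i.e. $S(x,y,u)$ holds iff $u=s(x,y)$, and to show that every relation on $\D$ is a pp-composition of $S$ alone. The existence of such an $s$ on every finite domain is taken as known. For $|\D|=2$ we use NAND (Sheffer), and for $|\D|\geq3$ we use Webb's function, as recalled just before the statement.

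The main step is a direct application of Theorem \ref{FunRed}. Its second clause says that, given a functionally complete set, every relation decomposes into graphs of functions from that set. Since $\{s\}$ is functionally complete, every $n$-ary relation $R$ decomposes into copies of $S$. Concretely, we write $R$ as in \eqref{Fred}. We then express the indexing function $f(t,x_1,\dots,x_{n-1})$ as a term built from $s$ and projections, and unfold the nesting via \eqref{Fun2Rel}, introducing one existentially quantified variable per occurrence of $s$. Next we recurse on $\textup{Dom}_R$, which has arity $n-1$. Termination is immediate because the arity drops by one at each step.

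I expect the main obstacle to be the base of the recursion and the low-arity cases, which Theorem \ref{FunRed} does not directly address. When the recursion on $\textup{Dom}_R$ reaches arity $1$, the domain relative is nullary: it is either true, in which case we drop it, or false. The false case arises only if $R$ is empty, and the empty relation can be pp-composed from $S$ if we accept the pp-formula ``false'', so I would note this convention explicitly. For unary $R$, \eqref{Fred} reads $R(x_1)=\exists t[x_1=f(t)]$ with $f$ a $1$-input function, and $f$ is again a term in $s$, so this case is also covered. Identity relations $x=y$, which ppfs may use by convention, need no special treatment.

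A further subtlety is that a term for $f$ may contain projections or repeated variables. This is harmless in the graph translation: if $u=s(x,x)$, then $S(x,x,u)$ is still a pp-formula with an identified variable. Likewise, a term that is a bare projection gives $x_n=x_i$, which is an equality atom. With these remarks the corollary follows, and the ternary $S$ alone composes all relations on $\D$.
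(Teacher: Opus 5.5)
Your proposal is correct and matches the paper's argument. The corollary is the second clause of Theorem \ref{FunRed} applied to the singleton complete set $\{s\}$, where $s$ is the Sheffer stroke or Peirce arrow on Boolean domains and Webb's $(a\lor b)'$ in general; its graph is the ternary relation. Your extra bookkeeping on the base of the recursion is fine, though the ``false'' convention is not needed when $|\D|\geq2$: for distinct constant terms $c_a,c_b$ in $s$, the ppf $[x_1=c_a(x_1)]\land[x_1=c_b(x_1)]$, unfolded via \eqref{Fun2Rel}, already gives the empty relation from $S$ alone, after padding with always-true atoms such as $\exists u\,S(x_i,x_i,u)$.
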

Let us illustrate the construction of Theorem \ref{FunRed} with a couple of examples.
\begin{example} Consider the $n$-identity relation $I_n$, whose tuples are all and only $n$-tuples of identical elements. Taking $x_n$ as the output, we see that $I_n$ is a partial function with $\textup{Dom}_{I_n}=I_{n-1}$. On this domain, $I_n$ can be identified with a function that extends to all of $\D$ as e.g. $f(x_1,\dots,x_{n-1}):=x_1$. This means, according to \eqref{Fdom}, that
\begin{equation*}
I_n(x_1,\dots,x_n)=\big[x_n=x_1\big]\land I_{n-1}(x_1,\dots,x_{n-1}).
\end{equation*}
Iterating $n-1$ times we obtain a familiar decomposition of $I_n$ into diagonals, namely
\begin{equation*}
I_n(x_1,\dots,x_n)=\big[x_n=x_1\big]\land\dots\land\big[x_2=x_1\big].
\end{equation*}
\end{example}

\begin{example}\label{NegIred2} The $n$-identity is small even on finite domains, and hence can be decomposed into binaries by hypostatic abstraction even there. In contrast, $|\neg I_n|=n^{|\D|}-n>n$ for $n\geq3$ and $|\D|\geq2$, i.e. for $n\geq3$ the non-$n$-identity is large already on Boolean domains. We will decompose it there first, where we can take advantage of the familiar Boolean algebra, and postpone the case of general finite domains until the next section, after Post's many-valued logics are introduced for the task. 

Note that $\textup{Dom}_{\neg I_n}=U_{n-1}$ is the universal relation (because any $(n-1)$-tuple can be complemented to an $n$-tuple with not all entries equal), and we can drop it from the decomposition in \eqref{Fred}. It remains to pick value selectors for the multifunction $\neg I_n$ and decompose the resulting indexing function into $2$-input functions.

In the Boolean case we can identify $\D$ with $\{0,1\}$ and pick  
$$
f_0(x):=\begin{cases}0,\,x\neq(0,\dots,0)\\
1,\,x=(0,\dots,0)\end{cases}\ \ \ \ \ \ 
f_1(x):=\begin{cases}1,\,x\neq(1,\dots,1)\\
0,\,x=(1,\dots,1)\end{cases}
$$
as the value selectors. Then, using the standard notation for Boolean operations,
\begin{gather}
f_0(x)=\nx_1\land\dots\land\nx_{n-1},\ \ \ \ \ \ 
f_1(x)=\nx_1\lor\dots\lor\nx_{n-1};\notag\\\label{negTf}
\text{so }f(t,x)=t\land\overline{x_1\land\dots\land x_{n-1}}\,\lor\,\nt\land\nx_1\land\dots\land\nx_{n-1} 
\end{gather}
satisfies $f(i,x)=f_i(x)$ for $i=0,1$. Since \eqref{negTf} expresses $f(t,x)$ via $2$-input and $1$-input functions the template  \eqref{Fun2Rel} will decompose $\neg I_n$ into ternary and binary relations of the form $z=x\land y$, $z=x\lor y$ and $y=\nx$.
\end{example}

\section{Reduction on Post's logics}\label{PostLog}

This section is expository and can be skipped without loss of continuity. Its purpose is to demonstrate that the reduction procedure of Theorem \ref{FunRed} is effective on all finite domains, and quite practical considering the wealth of functional decomposition methods. We recall the analogs of Boolean operations in Post's many-valued logics and of the canonical disjunctive normal form that reduces any function on them to $2$-input and $1$-input functions. The construction is well-known, see e.g. \cite[1.4]{Lau}, \cite{Ros76}, \cite[2.8]{Urq01}, but is rarely spelled out explicitly. As an illustration, we generalize to finite domains the Boolean reduction of $\neg I_n$.

From the modern perspective, a simple way to introduce (most of) Post's $(k+1)$-valued logic is by restricting the standard fuzzy logic with truth values in $[0,1]$ to the subset $\ds{\left\{0,\frac1k,\dots,\frac{k-1}k,1\right\}}$. However, it is more convenient to work with integers, so, as is customary in the clone theory, we multiply the truth values by $k$. Then their set becomes $\ds{E_{k+1}:=\left\{0,1,\dots,k-1,k\right\}}$, and the standard connectives are defined by
$$
a\land b:=\min(a,b);\ \ \ \ a\lor b:=\max(a,b);\ \ \ \ \overline{a}:=k-a.
$$
For $k=1$ we recover the Boolean algebra $E_2$. Alas, for $k>1$ the system $\left\{\land,\lor,\overline{\phantom{a}}\right\}$ is not functionally complete. Instead of $\,\overline{\phantom{a}}\,$, called the {\it diametric negation}, Post introduced the {\it cyclic shift} 
$$
a':=a+1\pmod{k+1},
$$
sometimes also called cyclic negation since $a'=\overline{a}$ in $E_2$. The system $\left\{\land,\lor,\,'\right\}$ turns out to be functionally complete.

To see this, first note that for any $x$
$$
0=x\land x'\land\dots\land x^{(k)},\, 1=0',2=0''\dots,\,k=0^{(k)},
$$
so we can generate all the constant functions. Next, consider the shifts $x^{(j)}$ with $j\neq i$. One of them is $k$ unless $k=x^{(i)}$, i.e. $x=k-i$. Therefore,
$$
\bigvee_{j\neq i} x^{(j)}=\begin{cases}k-1,\,x=k-i\\
k,\,x\neq k-i\end{cases}\!\!\!\!\!\!,
$$
and we can generate the {\it literals} (a.k.a. value isolators) \cite{Ros76}:
$$
x^i:=\begin{cases}k,\,x=i\\
0,\,x\neq i\end{cases}\!\!\!\!\!
=\left(\bigvee_{j\neq k-i} x^{(j)}\right)'.
$$
In the Boolean algebra, $x^0=\nx$ and $x^1=x$. From the literals we generate the analogs of Boolean monomials ($x$ and $\a$ below are now $n$-tuples of variables and truth values, respectively):
\begin{equation}\label{liter}
x^\a:=x_1^{\a_1}\land\dots\land x_n^{\a_n}\,
=\begin{cases}k,\,x=\a\\
0,\,x\neq \a\end{cases}.
\end{equation}
Since $k$ is maximal (the ``true") $i\land x^\a$ returns the given $i$ on $\a$ and $0$ otherwise. With the above,  any function $f:E_{k+1}\to E_{k+1}$ decomposes into the {\it canonical disjunctive normal form} (CDNF) \cite[1.4]{Lau}, \cite{Ros76}, which can be read off from its truth table as in the Boolean case (taking the empty disjunction to return $0$):
\begin{equation}\label{CDNF}
f(x)=\bigvee_{i\in E_{k+1}\-\{0\}} 
\left(\bigvee_{f(\a)=i}i\land x^\a\right).
\end{equation}
For the diametric negation, we get, for example, $\ds{\nx=\!\!\!\!\!\!\bigvee_{i\in E_{k+1}\-\{0\}}\!\!\!\!\!\!\!\!i\land x^{k-i}}$. In the Boolean case, CDNF reduces to the familiar $\ds{f(x)=\bigvee_{f(\a)=1}x^\a}$.

Note that CDNF directly uses, in addition to the $2$-input $\land,\lor$, only $0$-input constants and $1$-input literals, so we can replace the cyclic shift with them, and get another complete system of functions with at most $2$ inputs. Webb's $2$-input Sheffer function, a multi-valued generalization of Sheffer stroke, is given by $(a\lor b)'$.

Identifying $\D$ with $E_{k+1}$ for $k:=|\D|-1$, we get $2$-input decompositions on any finite domains. Let us illustrate the above constructions by generalizing Example \ref{NegIred} to the case of arbitrary $k\geq1$.

\begin{example}\label{NegIred} As in the Boolean case, $\neg I_n$ is a multifunction, so we only need to find its value selectors and decompose their indexing function $f(t,x_1,\dots,x_{n-1})$ into CDNF. Let $\vec{i}:=(i,\dots,i)$. With $\D=E_{k+1}$, the value selectors can be
$$
f_i(x):=\begin{cases}i,\,x\neq\vec{i}\\
i',\,x=\vec{i}\end{cases}\!\!\!.
$$
Indeed, $f_i$ select from $\neg I_n$ because $i\neq i'$ for any $i$, and they exhaust $E_{k+1}$ because every $(n-1)$-tuple other than $\vec{i}$ is evaluated to $i$ by $f_i$, while $\vec{i}$ is evaluated to any $j\neq i$ by $f_j$. These selectors can be uniformized as $f_i(x)=i\land\overline{x^{\vec{i}}}\lor i'\land x^{\vec{i}}$. As a result, we get an expression for the indexing function analogous to the Boolean one \eqref{negTf}, with the cyclic shift and diametric negation replacing the Boolean negation in different places:
$$
f(t,x)=t\land \overline{x_1^{t}\land\dots\land x_{n-1}^{t}}\,\lor\,t'\land x_1^{t}\land\dots\land x_{n-1}^{t}\,.
$$
To decompose $\neg I_n$ into ternaries and binaries  on any $E_{k+1}$, it remains to repeatedly apply the template \eqref{Fun2Rel}.
\end{example}

\section{Ternary from binary relations}\label{RedBin}

A natural question, left unanswered by Theorem \ref{FunRed}, is whether ternary relations reduce to binary relations on finite domains, as they do on infinite domains by hypostatic abstraction. This is the question we will take up in this section. Converting a function into a relation requires adding an extra argument to it, so binary reduction along the same lines would require us to compose all functions from $1$-input ones, which is, obviously, impossible. If the binary reduction is possible it must exploit a different idea.

Our idea is to reduce cofinite relations of small {\it arity} (up to the cardinality of the domain) to binary relations. In a sense, it is dual to hypostatic abstraction, which reduces relations of small {\it cardinality}. Unlike the BKKR construction \cite{BKKR}, or the non-constructive proof via the Galois connection \cite{HerPos04}, \cite[1.1.22]{PosKal}, our construction, together with Theorem \ref{FunRed}, can be said to explain `why' RRT is true when $|\D|\geq3$ and suggest `why' it breaks down on Boolean domains. Of course, the failure of a construction does not establish the negative claim in and of itself, but it is known that RRT fails on Boolean domains and irreducible ternary relations do exist \cite{HerPos04}. For example, $\neg I_3$ is one. For completeness, we will give an elementary proof in the next section.

Treating relations on $\D$ as predicates, i.e. functions $\D^n\to E_2$, we can apply Boolean operations to them. This allows to easily build many relations from fairly simple primitives. However, Boolean formulas do not decompose them, in our sense, into the primitives when they involve non-positive operations $\neg$ or $\lor$. We will write $I_2(x,y)$ and $\neg I_2(x,y)$ in the more traditional form $x=y$ and $x\neq y$, respectively, and bracket them when confusion with the equality of formulas may result. 

The predicates $x=a$ coincide with the literals $x^a$ from \eqref{liter} if we identify $\D$ with $E_{k+1}$ and $E_2$ with $\{0,k\}\subseteq E_{k+1}$, so we will also call them literals. Conjoining the literals, we express the singleton relations for $\a\in\D^n$ as monomials 
\begin{equation}\label{Rmon}
[x=\a]:=[(x_1=\a_1)\land\dots\land(x_n=\a_n)]=\bigwedge_{j=1}^n(x_j=\a_j),
\end{equation}
where now $x:=(x_1,\dots,x_n)$. Any finite cardinality relation on $\D$ can be built from them using disjunctions, namely
\begin{equation}\label{RCDNF}
R(x)=\bigvee_{\a\in R}(x=\a)\,.
\end{equation}
This is just the predicate version of Boolean CDNF.

In decompositions, aside from conjunctions, we can only use existential quantification. One can emulate some disjunctions with it, but only when the number of disjuncts is at most $|\D|$ and at the expense of increasing their arity by $1$. The idea is to convert the index into a (bound) variable, as in $\bigvee_jR_j(x)=\exists t\,R(t,x)$, The literals are unary, so converting them into binary relations is acceptable, but the number of disjuncts in the outer disjunction of \eqref{RCDNF} is $|R|$. This can go up to $|\D|^n$ for $n$-ary $R$, so we cannot convert CDNF into a ppf in general. When it does work, i.e. for small cardinality relations, it reproduces the hypostatic abstraction \eqref{HypAbs}. This is a dead end.

It could have been anticipated if we noticed that both conjunction (intersection) and existential quantification (projection) do not increase the cardinality of relations. If we are to compose all relations we must start from relations of largest cardinality, not the singletons. Aside from the universal relation $U_n$, those are the cosingletons expressed by comonomials
\begin{equation}\label{Rcomon}
[x\neq\a]:=[(x_1\neq\a_1)\lor\dots\lor(x_n\neq\a_n)]=\bigvee_{j=1}^n(x_j\neq\a_j)\,.
\end{equation}
Negating the CDNF for $\neg R$ we obtain the predicate version of CCNF
\begin{equation}\label{RCCNF}
R(x)=\bigwedge_{\a\not\in R}(x\neq\a)\,.
\end{equation}
for any cofinite $R$. On finite domains, all relations are both finite and cofinite, so CCNF decomposes them all into the comonomials.

Alas, the arity of $x\neq\a$ must match the arity of $R$, and their Boolean reduction \eqref{Rcomon} to unary coliterals is not a ppf. However, the disjunction there is not as bad as in CDNF. The number of disjuncts is only $n$, so for $n\leq|\D|$ the essential condition of converting disjunction into existential quantification is met. There is also a technical issue that disjuncts coming from turning $\exists t\,R(t,x)$ into $\bigvee_jR_j(x)$ must have the same list of variables, which is not the case in \eqref{Rcomon}. It can be handled as follows. Note that 
$$
[x_j\neq\a_j]=\bigwedge_{i=1}^n\left[(j\neq i)\lor(x_i\neq\a_i)\right]
$$
since the only non-trivial conjunct has $j=i$, and the right hand side now lists all $n$ variables. When $n\leq|\D|$, fix an injection $\pfi:\{1,\dots,n\}\to\D$, and define the unary relation $\textup{Ran}_\pfi(t)$ that holds on the range of $\pfi$. Now replace $j$ by an indexing variable $t$, and note that $j\neq i$ is equivalent to $\pfi(j)\neq\pfi(i)$. For $i=1,\dots,n$ and $a\in\D$ define the binary relations
$$
B_{i,a}(t,x):=\left[\big(t\neq\pfi(i)\big)\lor\big(x\neq a\big)\right]\land\textup{Ran}_\pfi(t).
$$
When $\D$ is identified with $E_{k+1}$ one can take $\pfi(i):=i-1$. The centerpiece of our construction is a reduction of low arity comonomials $x\neq\a$ to these binary relations by `existentialization' of their defining disjunction \eqref{Rcomon}. It is similar to the procedure used in the proof of Lemma \ref{MultSel} to `existentialize' the disjunction of value selectors.
\begin{theorem}\label{CoBin} For any $3\leq n\leq|\D|$ any cofinite $n$-ary relation on a domain $\D$ reduces to binary relations as follows:
\begin{equation}\label{CofinRed}
R(x)=\bigwedge_{\a\not\in R}\exists t\!\left[\,\bigwedge_{i=1}^nB_{i,\a_i}(t,x_i)\right]\!.
\end{equation}
\end{theorem}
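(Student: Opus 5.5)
By the CCNF \eqref{RCCNF}, $R(x)=\bigwedge_{\a\not\in R}(x\neq\a)$. Since a conjunction of ppfs is a ppf, it suffices to prove, for each fixed $\a\in\D^n$, the identity
$$
[x\neq\a]=\exists t\left[\,\bigwedge_{i=1}^nB_{i,\a_i}(t,x_i)\right].
$$
Each $B_{i,a}$ is binary, so this yields \eqref{CofinRed} with only binary relations. Because $n\geq3$, this is a genuine reduction. The hypothesis $n\leq|\D|$ is used only to guarantee that the injection $\pfi:\{1,\dots,n\}\to\D$ exists, so that $B_{i,a}$ is well defined.

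To prove the identity, I will unfold the right-hand side. The conjunction of the $n$ factors $\textup{Ran}_\pfi(t)$ collapses to a single $\textup{Ran}_\pfi(t)$, so $t$ ranges exactly over $\pfi(j)$ for $j=1,\dots,n$. Hence the existential quantifier becomes the disjunction
$$
\bigvee_{j=1}^n\bigwedge_{i=1}^n\big[(\pfi(j)\neq\pfi(i))\lor(x_i\neq\a_i)\big].
$$
Injectivity of $\pfi$ gives $\pfi(j)\neq\pfi(i)$ if and only if $j\neq i$. So for fixed $j$, every conjunct with $i\neq j$ is true, and the one with $i=j$ reduces to $x_j\neq\a_j$. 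This is exactly the identity noted just before the definition of $B_{i,a}$. The disjunction therefore equals $\bigvee_j(x_j\neq\a_j)$, which is the comonomial \eqref{Rcomon}.

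For the reverse direction, suppose $x\neq\a$. Pick $j$ with $x_j\neq\a_j$ and set $t:=\pfi(j)$. Then every $B_{i,\a_i}(t,x_i)$ holds, since either $i\neq j$ or $x_i\neq\a_i$. Conversely, if some $t$ witnesses the conjunction, then $t=\pfi(j)$ for some $j$, and the $i=j$ factor forces $x_j\neq\a_j$.

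I expect no real obstacle. The only points needing care are the role of $\textup{Ran}_\pfi$ and injectivity. Without $\textup{Ran}_\pfi$, a value of $t$ outside the range of $\pfi$ would make every factor vacuously true and the formula would always hold. Without injectivity, two indices could share a value of $t$, and the conjunct for $i\neq j$ would no longer be automatically true.

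Finally, I will note the degenerate cases. If $R=U_n$, the conjunction is empty; it is conventionally true, and can also be realized as a trivial ppf such as $\bigwedge_i[x_i=x_i]$ using diagonals. Finiteness of $\neg R$ ensures that the conjunction in \eqref{CofinRed} is finite, so it is a genuine formula.
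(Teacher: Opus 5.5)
Your proposal is correct and follows the paper's proof essentially step for step. CCNF reduces the claim to the comonomials; $\textup{Ran}_\pfi$ turns $\exists t$ into a disjunction over $j$; injectivity of $\pfi$ collapses each disjunct to $x_j\neq\a_j$; and \eqref{Rcomon} reassembles $x\neq\a$. Your extra remarks are sound refinements that the paper leaves implicit: the direct two-way check, why $\textup{Ran}_\pfi$ and injectivity are needed, and the empty conjunction when $R=U_n$.
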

\begin{proof} The existentially quantified conjuncts in \eqref{CofinRed} simply reduce $x\neq\a$ to binary relations $B_{i,a}(t,x)$. Indeed, due to the conjunction with $\textup{Ran}_\pfi(t)$, we can replace $t$ by $\pfi(j)$ and $\exists t$ by disjunction over $j$ without changing the corresponding relation. The disjuncts then simplify to
\vspace*{-0.9em}
\begin{multline*}
\bigwedge_{i=1}^nB_{i,\a_i}\big(\pfi(j),x_i\big)
=\bigwedge_{i=1}^n\left[\big(\pfi(j)\neq\pfi(i)\big)\lor\big(x_i\neq\a_i\big)\right]\\
=\bigwedge_{i=1}^n\left[\big(j\neq i\big)\lor\big(x_i\neq\a_i\big)\right]
=[x_j\neq\a_j]. 
\end{multline*}
By \eqref{Rcomon}, their disjunction further simplifies to just $x\neq\a$. It remains to apply CCNF \eqref{RCCNF} to the outer conjunction over $\a$.
\end{proof}
Combining this result (even just for $n=3$) with Theorem \ref{FunRed} we can now reduce any cofinite relation to binary ones as long as $|\D|\geq3$. For $|\D|=2$ the above construction obviously breaks down because the injection $\pfi$ does not exist. There are not enough elements in Boolean domains to emulate triple and higher disjunctions by existential quantification. Although this does not prove irreducibility of some ternary relations, it strongly suggests it. 

While the BKKR reduction \cite{BKKR} is less transparent than ours and works only on finite domains, it does prove a stronger result: relations of higher arity are reduced not just to {\it some} binary relations, but to two specific ones. Let us briefly sketch it for comparison. Identifying $\D$ with $E_{k+1}$, we have the binary relation $\leq$ on it since $E_{k+1}$ is well-ordered. From $\leq$ and $\neq$, we can compose $<$ as $[x<y]=[(x\leq y)\land(x\neq y)]$. The constants and literals are then composed using $<$. Negations of the literals $x=a$ are composed from them and $\neq$ as $\exists t\,[(x\neq t)\land (t=a)]$.
The ternary relation 
$$
Q_3(x,y,z):=[(x=y)\to(x=z)]=[(x\neq y)\lor(x=z)]
$$ 
plays a key role in the BKKR construction \cite{BKKR}. Its defining formula is not a ppf, but BKKR do explicitly reduce $Q_3$ to $\leq$ and $\neq$ for $|\D|\geq3$. A rather long and opaque ppf then reduces cosingletons $x\neq\a$ to $Q_3$ and some binary relations, and an application of CCNF \eqref{RCCNF} completes the reduction, as in our case. When reducing cosingletons, BKKR have to separately consider different patterns of identical entries in them. Much of that complexity is removed in our construction by outsourcing reduction of high arity cosingletons to functional decompositions in the relative reduction.

\section{Boolean wrinkle}\label{BoolIrr}

We will now confirm that the exclusion of Boolean domains from Theorem \ref{CoBin} is not an artifact of the construction. The fact that some ternary relations are irreducible on Boolean domains is well-known and follows from the structure of the coclone lattice, which is dual to the Post lattice of Boolean clones. The first explicit proof seems to be due to Schaefer \cite{Sch78}, who did not use clone theory, but it is  somewhat convoluted. In the spirit of the paper, we will give a simple elementary proof.

Boolean relations, when identified with their predicates, coincide with Boolean functions as sets, both are maps $E_2^n\to E_2$. Therefore, relations can be represented by standard Boolean expressions, like $x\oplus y$ for $x\neq y$, or $x\to y$ for $x\leq y$. Relational CDNF \eqref{RCDNF} and CCNF \eqref{RCCNF} then turn into the familiar functional versions, but the composition is different. Functional composition naturally aligns with substitutions into standard connectives, while relational pp-composition is less intuitive. We can split it into three operations: taking conjunctions, identifying variables in predicates and existentially quantifying. The former two translate into the corresponding operations on representing functions, while the latter becomes
\begin{equation}\label{E2Fun}
\exists\,t\big[R(x_1,\dots,x_{n-1},t)\big]=R(x_1,\dots,x_{n-1},0)\lor R(x_1,\dots,x_{n-1},1).
\end{equation}

Of particular interest to us are relations called {\it bijunctive} \cite{CreVol08, Sch78}, which are defined, in terms of their truth value functions, as those admitting conjunctive normal forms with at most two literals (2CNF). Recall that conjuncts of CNF are, in general, finite disjunctions of literals called {\it clauses}. Clearly, all unary and binary relations are bijunctive, as their CCNF is a 2CNF. Moreover, bijunctivity is preserved by pp-composition. 
\begin{lemma}\label{BijComp}
A pp-composition of bijunctive relations is bijunctive.
\end{lemma}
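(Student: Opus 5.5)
We split pp-composition, as the paper does just above, into three elementary operations: forming conjunctions, identifying variables (or renaming them), and existentially quantifying a single variable. Since any ppf is built from predicates by finitely many of these steps, it suffices to check, by induction on the structure of the formula, that each step maps bijunctive relations to bijunctive ones. We may also need to allow the diagonal $x=y$ as a building block; it is bijunctive, being $(\nx\lor y)\land(x\lor\overline{y})$.

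Conjunctions are immediate: the conjunction of two 2CNFs is again a 2CNF (variables may be shared or not). Identifying variables, say substituting $y$ for $x$ in a 2CNF, turns each clause of at most two literals into a clause of at most two literals. Such a clause may become trivially true, as $y\lor\overline{y}$, and is then dropped; or it may collapse to a single literal, as $y\lor y=y$. Either way the result is a 2CNF. Free conjunction with a relation in fresh variables is a special case of conjunction.

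The main step is existential quantification. By \eqref{E2Fun}, $\exists t\,R(\dots,t)=R(\dots,0)\lor R(\dots,1)$, and a disjunction of 2CNFs is not obviously a 2CNF. We handle this by resolution. Write $R=A\land\bigwedge_i(\ell_i\lor t)\land\bigwedge_j(m_j\lor\nt)$, where $A$ collects the clauses not containing $t$, and unit clauses $t$ or $\nt$ are treated as $\ell_i$ or $m_j$ equal to the constant false (i.e. clauses with an empty partner). We may discard clauses containing both $t$ and $\nt$. The claim is
$$
\exists t\,R=A\land\bigwedge_{i,j}(\ell_i\lor m_j),
$$
with $\ell_i\lor m_j$ read as $m_j$, $\ell_i$, or false when a partner is empty. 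Each resolvent has at most two literals, so the right side is a 2CNF.

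The inclusion from left to right is routine: if $t=0$ satisfies $R$ then all $\ell_i$ hold, and if $t=1$ then all $m_j$ hold, so every resolvent is true. For the converse, take an assignment satisfying the right side. If some $\ell_i$ is false, then every $m_j$ must be true, so setting $t=1$ satisfies all clauses; if every $\ell_i$ is true, set $t=0$. Writing this as a two-case argument on the truth of $\bigwedge_i\ell_i$ keeps it elementary.

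This quantification step is the only real obstacle; I expect the rest to be routine bookkeeping about degenerate clauses. The induction over the ppf then gives the lemma.
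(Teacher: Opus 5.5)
Your proof is correct and follows the paper's route. You use the same split into conjunction, identification and quantification, and your resolvent formula $A\land\bigwedge_{i,j}(\ell_i\lor m_j)$ is exactly what the paper gets by substituting $t=0,1$ via \eqref{E2Fun} and distributing $\land$ over $\lor$. The only differences are that you check this identity by a direct two-case argument, and you make explicit that the $t$-free clauses $A$ are factored out, which the paper's ``disjunction of two monomials'' leaves tacit.
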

\begin{proof}
Conjunction of 2CNF is trivially a 2CNF, and identification of variables either leaves 2-clauses as 2-clauses, collapses them into 1-clauses (when the literals coincide), or into constant $1$ (when the literals are complementary). So the result is still a 2CNF. For the quantification, note that substituting $0$ or $1$ into 2-clauses turns them into either $1$-clauses or constants. Then $1$-s can be removed from the resulting conjunction, and $0$-s would turn it into $0$. Unless it is the latter, the disjunction in \eqref{E2Fun} becomes a disjunction of two monomials, and distributing $\land$ over $\lor$ turns it into a 2CNF. 
\end{proof}
Thus, to show that a Boolean relation does not decompose into binary ones it suffices to show that it is not bijunctive. That we can do by elementary means already for ternary relations.
\begin{theorem}
$\neg I_3$ is non-bijunctive, and hence irreducible to binary relations on Boolean domains.
\end{theorem}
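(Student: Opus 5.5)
Suppose, for contradiction, that $\neg I_3$ on $E_2$ has a 2CNF representation $\neg I_3(x,y,z)=C_1\land\dots\land C_m$, where each clause $C_j$ has at most two literals. Since $\neg I_3$ fails only at the two tuples $(0,0,0)$ and $(1,1,1)$, every clause must be true on the remaining six tuples. Moreover, some clause must be false at $(0,0,0)$ and some clause must be false at $(1,1,1)$.

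The key step is to examine a clause $C$ that is false at $(0,0,0)$. At that tuple every positive literal evaluates to $0$ and every negated literal evaluates to $1$. So $C$ must consist only of positive literals, and hence it is one of $x$, $y$, $z$, $x\lor y$, $x\lor z$, $y\lor z$. Each of these is false at some tuple of $\neg I_3$:
\begin{itemize}
\item the unit clauses are killed by a tuple with a single $1$ placed elsewhere, e.g.\ $x$ fails at $(0,1,0)$;
\item the $2$-clauses are killed by a tuple with the $1$ in the third coordinate, e.g.\ $x\lor y$ fails at $(0,0,1)$, which lies in $\neg I_3$.
\end{itemize}
So no such clause can appear in a 2CNF of $\neg I_3$, which is a contradiction. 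The point is that a clause on only two of the three variables cannot see the third coordinate, and this is exactly where the arity bound $2$ bites. The tuple $(1,1,1)$ is not even needed, although the symmetric argument with negative clauses would work equally well.

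To conclude, I would invoke Lemma \ref{BijComp}. Every unary and binary relation is bijunctive, since its CCNF is already a 2CNF. Any pp-composition of binary (and unary) relations is therefore bijunctive. Since $\neg I_3$ is not bijunctive, it cannot be decomposed into binary relations; in particular it admits no reduction in the sense of Definition \ref{ClonComp}.

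I do not expect a real obstacle here. The only care needed is in the case analysis: one has to be sure that \emph{any} clause falsified at $(0,0,0)$ is all-positive, including the degenerate cases such as unit clauses and clauses with a repeated variable. Constants need no separate case: the constant $0$ clause is false everywhere, including on the non-empty relation $\neg I_3$, so it cannot occur, and the constant $1$ clause can simply be dropped.
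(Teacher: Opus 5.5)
Your proof is correct and is essentially the paper's argument in dual form. The paper negates the putative 2CNF into a 2DNF for $I_3$ and rules out every type of monomial (unary, mismatched binary, matched binary); you stay with the clauses and only need to rule out the all-positive ones, since one of them must falsify $(0,0,0)$. The final step via Lemma \ref{BijComp} is exactly as in the paper.
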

\begin{proof}
By contradiction, suppose $\neg I_3$ has a 2CNF. Then $I_3$ has a 2DNF, a disjunction of monomials with at most two literals, obtained by negating the 2CNF and applying de Morgan laws. Unary monomials, like $x$ or $\overline{x}$, would make the entire disjunction equal $1$ for $x=1$ or $x=0$, respectively, regardless of $y$ and $z$ values. So no such monomials are there. There are no binary monomials with mismatched literals either. Indeed, say $x\land\overline{y}$ or $\overline{x}\land y$ would make the disjunction equal $1$ for unequal values of $x$ and $y$. Finally, binary monomials with matched literals, like $x\land y$ or $\overline{x}\land\overline{y}$, would make the disjunction equal $1$ for $x=y=1$ or $x=y=0$, respectively, regardless of $z$ values. But if no monomials are there, the supposed 2DNF of $I_3$ must be a constant, contradiction.
\end{proof}
We now have at least one ternary relation that is not reducible to binary ones on Boolean domains, and we can present several more. Theorem \ref{FunRed} decomposes all relations into ternary relations of the form $z=g(x,y)$, the graphs of $2$-input functions used in functional decompositions. Consider any $2$-input Boolean function $g$ that alone, or together with $1$-input negation, forms a functionally complete system. If the ternary relation $z=g(x,y)$ were bijunctive then, by Theorem \ref{FunRed}, every relation would have been bijunctive. Thus, the graphs of $\land$, $\lor$, $\to$, $\leftarrow$, $|$ (Sheffer stroke), $\downarrow$ (Peirce arrow) are all non-bijunctive.

Leaving aside the elementary approach, one can prove much stronger results. Bijunctive relations form a maximal coclone, meaning that adding any non-bijunctive relation to them gives a relationally complete set \cite{BRSV05,CreVol08}. Moreover, this coclone is pp-compositionally generated by binary relations. In fact, one can take $\leq$ and $\neq$, those two relations that generate the coclone of {\it all} relations when $|\D|\geq3$ by the BKKR construction, as the bijunctive generators for $E_2$ \cite{BRSV05}. This means that $\leq$,  $\neq$, $\neg I_3$ generate all Boolean relations, and one can replace $\neg I_3$ by any non-bijunctive Boolean relation of any arity.

There is also an alternative characterization that allows to test relations for bijunctivity based on their truth value set. It was proved by Schaefer, but was probably known earlier. To describe it, define the $3$-input majority function as follows\begin{equation}\label{3Major}
\mu_3(x,y,z):=(x\land y)\lor(y\land z)\lor(x\land z).
\end{equation}
It returns $1$ when a majority of its inputs is $1$, and $0$ when a majority is $0$. It turns out that bijunctive relations are all and only those preserved by $\mu_3$. That is, if we extend $\mu_3$ entrywise to a tuple-valued function on tuples then $\mu_3(\a,\beta,\gamma)$ must be in the relation whenever $\a,\beta,\gamma$ are. 

By the Pol-Inv Galois connection, maximality of the bijunctive coclone is equivalent to minimality of the clone generated by $\mu_3$. On non-Boolean domains, there are many $3$-input functions that return the majority value when there is one. There are $729$ of them already on $3$-element domains \cite{Ros86}. From this point of view, the reason all relations can be reduced to binary relations on non-Boolean domains is that none of those majority functions preserves all binary relations.

\section{Bonds and teridentity}\label{TerId}

The original version of the reduction thesis for relations goes back to Peirce \cite{Bur97,Kosh22}. However, he operated with a more restrictive notion of composition that iterates relative products and excludes many ppfs, making reduction to binary relations impossible even on non-Boolean domains. After L\"owenheim's result on binary reducibility \cite{Low}, Peirce's thesis became controversial \cite{Kosh22}, and was not treated mathematically until the formalizations of Herzberger \cite{Herz} and Burch \cite{Bur91} in 1980-s. And its connection to universal algebra and pp-decompositions was only pointed out in 2004 by Hereth and P\"oschel \cite{HerPos04}. 

In this section, we will show, following Burch \cite{Bur91,Bur97}, that, somewhat surprisingly, for arities $n\geq4$ Peirce's restricted composition leads to the same notion of reducibility as pp-composition. However, some ternary relations, are irreducible on Peircean definition. One such relation, the ternary identity $I_3$, {\bf teridentity} for short, plays a key role in converting pp-decompositions into Peircean ones, and is the only ternary relation needed in Peircean reductions.

The relative product is a binary operation on predicates where one inserts the same variable into a single argument of each and existentially quantifies over it. As we saw in the Introduction, this is the most direct generalization of the composition of two functions. One can also insert the same variable into {\it two} different arguments of the same predicate and existentially quantify over it. These are the only ways to identify variables, so only free conjunctions are allowed, no joins. The triple junction $\exists t\left[P(t,x)\land Q(t,y)\land R(t,z)\right]$ and higher cannot be generated by Peirce's means either, because the identified variables are quantified over and no new variable can be identified with them afterwards. But those junctions are exactly the ones featured in hypostatic abstraction \eqref{HypAbs}.

To summarize, in predicate formulas produced by iterating Peirce's operations no variable can occur in more than two positions, and if it does it has to be a bound variable. We will adopt Herzberger's term ``bond" for this restricted class of compositions, although our bond is slightly more permissive than his, along the lines of \cite{HerPos04}. The name is inspired by the analogy with chemical bonding promoted by Peirce himself. For a detailed study of bonding and its relation to graph theory, see \cite{Kosh23}. 
\begin{definition}[{\bf Bond}]\label{BondDef}
A {\bf bonding formula (bf)} is a primitive positive formula where no free variables repeat, and bound variables repeat at most twice. We say that a relation is a {\bf bond of relations}, or that it is {\bf bonded} from them, when its predicate can be expressed by a bf with their predicates being the only predicate symbols in it. A bond is a {\bf bond reduction} when the bonded relations have strictly lower arity than their bond. A relation is {\bf bond reducible} when it admits a bond reduction.
\end{definition}
Peirce's first observation was that multiple variable identifications bond reduce to pairwise ones by using $n$-ary identity predicates $I_n$, e.g.
\begin{multline}\label{multiidn}
R_1(t,x_{\L_1})\land\dots\land R_n(t,x_{\L_n})=\\
\exists t_1\dots\exists t_n\left[I_{n+1}(t,t_1,\dots,t_n)\land R_1(t_1,x_{\L_1})\land\dots\land R_n(t_n,x_{\L_n})\right].
\end{multline}
And his second observation was that $I_n$ for $n\geq4$ bond reduce to teridentities $I_3$: 
\begin{multline}\label{InviaI3}
I_{n}(x_1,\dots,x_n)=\\
\exists t_1\dots\exists t_{n-3}\left[I_3(x_1,x_2,t_1)\land I_3(t_1,x_3,t_2)\land\dots\land I_3(t_{n-3},x_{n-1},x_n)\right].
\end{multline}
Applying the above identities, one can convert a pp-composition into a bond of the original relations and teridentities. As in \cite{Kosh22,Kosh23}, we call this conversion {\bf bond explication}. Since the explication adds only ternary relations we have the following theorem.
\begin{theorem}\label{Comp2Bond} 
For $n\geq4$, an $n$-ary relation is pp-reducible if and only if it is bond reducible. A ternary relation is pp-reducible if and only if it is a bond of binary relations and teridentities.
\end{theorem}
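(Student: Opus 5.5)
The plan is to prove both directions of each claim using bond explication, i.e. formulas \eqref{multiidn} and \eqref{InviaI3}, to turn a ppf into a bf.

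\emph{Bond to pp.} Every bf is a ppf by Definition \ref{BondDef}. So a bond reduction of an $n$-ary relation is already a pp-reduction. For the ternary case, $I_3(x,y,z)=[x=y]\land[y=z]$ is a pp-composition of binary diagonals, and substituting this into the bf gives a ppf in binary relations only. This settles the easy directions of both claims.

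\emph{Pp to bond.} Take a pp-reduction of $R$ into relations $S_1,\dots,S_m$ of arity $<n$. Put the ppf in prenex form $\exists\bar t\,[\bigwedge_k S_{j_k}(\dots)]$.
\begin{itemize}
\item[(1)] Remove repeated variables \emph{inside} a single atom. If $S(x,x,\dots)$ occurs, rename the second occurrence to a fresh bound variable $x'$ and add a conjunct $I_2(x,x')$. The diagonal $I_2$ is binary, hence of arity $<n$, and is already allowed in the ternary claim.
\item[(2)] Handle a variable $v$ that occurs in $p$ positions, where $p\geq3$ if $v$ is bound and $p\geq2$ if $v$ is free. Replace its occurrences by fresh bound variables $v_1,\dots,v_p$, each used once. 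Then add $I_{p+1}(v,v_1,\dots,v_p)$ if $v$ is free, or $I_p(v_1,\dots,v_p)$ (with $v$ dropped) if $v$ is bound, as in \eqref{multiidn}.
\item[(3)] Expand every identity $I_q$ with $q\geq4$ into a chain of teridentities by \eqref{InviaI3}. In that chain each internal $t_i$ occurs exactly twice, and each original variable occurs once. Also, $I_2$ needs no expansion, and $I_1$ is the universal relation and can be dropped.
\end{itemize}
After these steps, every free variable occurs exactly once and every bound variable at most twice, so the result is a bf. Its predicates are the original $S_j$, the diagonals $I_2$ and teridentities $I_3$. For $n\geq4$ all of these have arity $<n$, so we get a bond reduction. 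For $n=3$ the $S_j$ are binary (or unary), so $R$ is a bond of binary relations and teridentities.

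\emph{Main obstacle.} The delicate point is the bookkeeping in step (2). I must check that introducing the identity atoms creates no new over-occurrences. Each fresh $v_i$ appears once in the original atom and once in the identity atom. The original free $v$ appears only in the identity atom. When the chain \eqref{InviaI3} replaces $I_{p+1}$, its endpoint arguments inherit exactly one occurrence each.

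Two edge cases also need checking. First, a free variable may have no occurrence in any atom (a dummy argument); then conjoin a unary universal relation $U_1(x)$, which is of low arity. Second, $I_q$ may be needed with $q=n$ or larger. This causes no problem, since after step (3) only $I_3$ and $I_2$ remain.
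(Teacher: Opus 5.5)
Your proposal is correct and follows the paper's route. The pp-to-bond direction is exactly the paper's bond explication via \eqref{multiidn} and \eqref{InviaI3}, with your occurrence bookkeeping making explicit what the paper leaves implicit, and the other directions are immediate. The one cosmetic difference is in the ternary converse: you replace each teridentity by $[x=y]\land[y=z]$, whereas the paper collapses the teridentity's variables into a single new variable; your version also covers teridentities containing free variables without further comment.
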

\begin{proof} The only claim not covered by the bond explication is that ternary  bonds of binary relations and teridentities are reducible. Given such a bond, assign a new variable to each teridentity and replace by it all occurrences of the original variables from the teridentity. Then remove the teridentity and the quantifiers over its variables. By \eqref{multiidn}, this produces an equivalent expression, and, since all teridentities are removed, it is a composition of binary relations only.
\end{proof}
\noindent In other words, bond reducibility to ternary relations is equivalent to pp-reducibility to binary relations. This justifies our talking of Peirce's reduction thesis despite using a different notion of composition. Ironically, L\"owenheim's result on binary reducibility \cite{Low} once made Peirce's reduction thesis controversial due to the confusion between these two closely related notions. However, while bond and compositional reducibilities are (almost) the same, bond reductions are much more special than general relational reductions, and typically involve many more ternary relations (teridentities). 

As for the ``almost", the two reducibilities do diverge on ternary relations. Ideologically, the proof goes back to Peirce himself, and variants of it are given in \cite{Bur91,HerPos04,Herz,Kosh22}.
\begin{theorem}\label{I3Irr} 
$I_3$ is bond irreducible on any domain with at least two elements.
\end{theorem}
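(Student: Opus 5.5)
We argue by contradiction, using the structure of bonding formulas as graphs. Suppose $I_3(x,y,z)$ is expressed by a bf $\Phi$ whose predicate symbols are all of arity at most $2$. Associate to $\Phi$ a multigraph. Its nodes are the occurrences of predicates in $\Phi$ (binary or unary atoms, with equalities treated as binary atoms), plus three special nodes for the free variables $x,y,z$. Its edges are the variable occurrences: a bound variable occurring twice joins the two atoms containing it, and a bound variable occurring once is a dangling edge to a dummy node. Each free variable occurs exactly once by the definition of a bond, so it joins its unique atom to its special node. Every atom has degree at most $2$, so every connected component of this graph is a path or a cycle. In particular, the three free variables cannot all lie in one component: a path has only two ends, and the special nodes are leaves.

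The key step is to show that $\Phi$ splits along components. Atoms in different components share no variables, and the existential quantifiers can be distributed over the conjunction. Hence $\Phi$ is equivalent to a conjunction $\Phi_1\land\dots\land\Phi_m$, where each $\Phi_j$ has at most two of $x,y,z$ free. So the relation defined by $\Phi$ is a conjunction (a free product, up to permuting arguments) of relations each depending on at most two of the three variables. Grouping these, we get
$$I_3(x,y,z)=A(x,y)\land B(y,z)\land C(x,z)\land(\text{nullary factors}),$$
where some factors may be absent or degenerate. If some nullary factor were false, $\Phi$ would be identically false, whereas $I_3$ is nonempty. So we may drop the nullary factors.

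To reach the contradiction, take distinct $a,b\in\D$, which exist since $|\D|\ge2$. We have $(a,a,a),(b,b,b)\in I_3$. The claim is that at least one mixed triple such as $(a,a,b)$ also satisfies the conjunction. Because no single factor sees all three variables, we can try to satisfy each factor separately using coordinates from $(a,a,a)$ or $(b,b,b)$. For the triple $(x,y,z)=(a,a,b)$ we have $A(a,a)$, taken from $(a,a,a)$. But $B(a,b)$ and $C(a,b)$ are mixed and are not guaranteed, so the naive choice fails.

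The main obstacle is therefore the case where all three pairwise factors are present. The fix is to look more carefully at the components. A component containing two free variables is a path from one special node to another, so each of $x,y,z$ lies in at most one component. Hence at most one component meets two of the variables while the third lies elsewhere, or else the variables are in three separate components. In either case some variable, say $z$, lies in a component containing no other free variable. Then
$$I_3(x,y,z)=P(x,y)\land Q(z),$$
after possibly permuting $x,y,z$, where $P$ comes from the component(s) containing $x,y$ and $Q$ from the one containing $z$. From $(a,a,a)$ we get $P(a,a)$, and from $(b,b,b)$ we get $Q(b)$. Hence $(a,a,b)\in I_3$, a contradiction.

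The careful point in the write-up is justifying that $\Phi$ factors as a conjunction over components. This holds because a bound variable joins atoms in the same component by construction, so prenex quantifiers split over disjoint sets of variables.
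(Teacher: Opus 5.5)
Your proof is correct and is essentially the paper's argument. The paper's chain of variables starting at $x$ is exactly the path component of $x$ in your degree-$\le 2$ graph, and both arguments split the formula into a free conjunction $P(x,y)\land Q(z)$ (up to permuting variables), then refute it with the triple $(a,a,b)$; your graph formalism only makes the splitting step more explicit, and the detour through the three pairwise factors $A,B,C$ can be dropped.
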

\begin{proof} By contradiction, suppose we have a bf expressing $I_3(x,y,z)$ with only binary predicates in it. Since free variables are not shared there is a single predicate with $x$ in it. Let $t_1$ be the other variable in it. If it is shared, it is with a single other predicate and we let $t_2$ be the other variable in that one. And so on. This chain of variables stops at a variable not shared, either free or bound. None of the shared variables can be free, so our chain has at most two free variables (the first and the last). Moreover, variables from the chain can only appear in the predicates encountered in its construction, or they would be shared by more than two predicates.

Therefore, after prenexing quantifiers if necessary, we can split off the conjunction of the encountered predicates and represent our bf as a free conjunction of the form $P(x)\land Q(y,z)$, or $P(x,y)\land Q(z)$, or $P(x,z)\land Q(y)$. However, none of these can represent $I_3$. Indeed, suppose $I_3(x,y,z)=P(x,y)\land Q(z)$ and let $a\neq b\in\D$. Then $P(a,a)\land Q(a)$ and $P(b,b)\land Q(b)$ are both true, so $P(a,a)$ and $Q(b)$ are true, and hence $P(a,a)\land Q(b)$ must be true as well. But $I_3(a,a,b)$ is false, and analogous contradiction results in the other two cases.
\end{proof}
\noindent The proof implicitly employs graph-theoretic concepts, which are made explicit and developed in \cite{HerPos04,Kosh23}. It also shows that any bond reducible ternary relation must be a free conjunction (Cartesian product) of unary and/or binary relations. Peirce saw it as a vindication of a special role of ternary relations, and teridentity in particular, in actually {\it relating} their arguments, which Cartesian products fail to do \cite{Brun91,Kosh22}.

\section{Conclusions and open problems}\label{Conc}

We gave an elementary construction  of pp-reductions of higher arity relations to ternary and binary relations on finite domains. A key part of the proof reduces decomposition of relations to decomposition of functions, which is of general interest.

Unlike functional completeness, its relational analog is little studied. In particular, \cite{Ros76}, there are no general relational analogs of  completeness criteria, nor of full characterizations of maximal coclones on general finite domains. As a result, some basic questions long settled for functions remain open for relations. For example, our construction implies existence of Sheffer relations on any finite domain, but we do not know if there are any {\it binary} Sheffer relations on non-Boolean domains. The smallest known pp-complete system for them was constructed by BKKR and contains two binary relations, $\leq$ and $\neq$. There are some intriguing parallels between the role of unary relations in the CSP tractability conditions \cite{BJK} and of $1$-input functions in Slupecki's criterion of functional completeness \cite{Ros77} that suggest a possible direction of research.

There are even fewer completeness results for subsets of relations closed under the pp-composition (coclones), and it might be useful to relativize our construction to them. Recall that a set of relations that pp-compose all other relations in a coclone is called its {\it base} \cite{BRSV05}. Our Theorem \ref{FunRed} can be restated as saying that graphs of functionally complete systems are bases in the coclone of all relations. New bases of small arity for Boolean coclones were recently constructed in a number of works, including \cite{BRSV05,Lag17,Mar98}, but the non-Boolean case remains almost untouched. While not all coclones admit bases of consisting of function graphs even in the Boolean case, many do, and even those that do not have bases where graphs are supplemented by simple non-functional relations (like $\leq$). It is also of interest which coclones admit graph bases of small arity. The minimal required arity has been studied in the clone theory literature under the name of relation degree \cite[ch.\,10]{Lau}.

Finally, in a stark contrast to logic functions \cite{Ros76}, Shannon complexity of pp-decompositions into relational ``elements" has not been studied almost at all. Even qualitative analysis reveals a striking discontinuity between decompositions on finite and infinite domains that deserves further study \cite{Kosh23}. On infinite domains, one can reduce any $n$-ary relation to a projected conjunction of just $n$ binary relations. On finite domains, the structure of the decomposition is much more complex even for small arities, and the minimal number of binary relations in it grows with the size of the domain. The minimal number of ternary relations in bond reductions on finite domains has some information theory implications and is studied in \cite{Kosh23} in connection with Peirce's work.

\end{document}